\patchcmd{\@citex}{\if@filesw}{\getcitekey\@citeb \if@filesw}%
    {\typeout{*** SUCCESS ***}}{\typeout{*** FAIL ***}}
\patchcmd{\nocite}{\if@filesw}{\getcitekey\@citeb \if@filesw}%
    {\typeout{*** SUCCESS ***}}{\typeout{*** FAIL ***}}
\tikzset{commutative diagrams/arrow style=Latin Modern}
\newcommand{\shH}{\mathcal{H}}
\newcommand{\shA}{\mathcal{A}}
\newcommand{\Mmod}{\mathcal{M}}
\newcommand{\Dmod}{\mathscr{D}}
\newcommand{\abs}[1]{\lvert #1 \rvert}
\newcommand{\tensor}{\otimes}
\newcommand{\ZZ}{\mathbb{Z}}
\newcommand{\QQ}{\mathbb{Q}}
\newcommand{\RR}{\mathbb{R}}
\newcommand{\PP}{\mathbb{P}}
\newcommand{\menge}[2]{\bigl\{ \thinspace #1 \thinspace\thinspace \big\vert%
\thinspace\thinspace #2 \thinspace \bigr\}}
\DeclareMathOperator{\rk}{rk}
\DeclareMathOperator{\id}{id}
\DeclareMathOperator{\Sym}{Sym}
\DeclareMathOperator{\gr}{gr}
\DeclareMathOperator{\Var}{Var}
\newcommand{\shf}[1]{\mathscr{#1}}
\newcommand{\OX}{\shf{O}_X}
\def\overbar#1#2#3{{%
	\setbox0=\hbox{\displaystyle{#1}}%
	\dimen0=\wd0
	\advance\dimen0 by -#2 
	\vbox {\nointerlineskip \moveright #3 \vbox{\hrule height 0.3pt width \dimen0}%
		\nointerlineskip \vskip 1.5pt \box0}%
}}
\newcommand{\into}{\hookrightarrow}
\newcommand{\fu}{f^{\ast}}
\newcommand{\tl}{t_{\ast}}
\newcommand{\shF}{\shf{F}}
\newcommand{\shG}{\shf{G}}
\newcommand{\shE}{\shf{E}}
\newcommand{\shO}{\shf{O}}
\let\@@seccntformat\@seccntformat
\renewcommand*{\@seccntformat}[1]{%
  \expandafter\ifx\csname @seccntformat@#1\endcsname\relax
    \expandafter\@@seccntformat
  \else
    \expandafter
      \csname @seccntformat@#1\expandafter\endcsname
  \fi
    {#1}%
}
\newcommand*{\@seccntformat@subsection}[1]{%
  \textbf{\csname the#1\endcsname.}
}
\let\@paragraph\paragraph
\renewcommand*{\paragraph}[1]{%
	\vspace{0.3\baselineskip}%
	\@paragraph{\textit{#1}}%
}
\newtheorem{theorem}[equation]{Theorem}
\newtheorem*{theorem*}{Theorem}
\newtheorem{lemma}[equation]{Lemma}
\newtheorem*{lemma*}{Lemma}
\newtheorem*{corollary*}{Corollary}
\newtheorem*{proposition*}{Proposition}
\newtheorem*{conjecture*}{Conjecture}
\theoremstyle{definition}
\newtheorem*{definition*}{Definition}
\theoremstyle{remark}
\newtheorem{remark}{Remark}
\newtheorem*{remark*}{Remark}
\newtheorem*{example*}{Example}
\newtheorem*{problem*}{Problem}
\newtheorem*{note}{Note}
\theoremstyle{plain}
\newcommand{\theoremref}[1]{\hyperref[#1]{Theorem~\ref*{#1}}}
\newcommand{\lemmaref}[1]{\hyperref[#1]{Lemma~\ref*{#1}}}
\newcommand{\remarkref}[1]{\hyperref[#1]{Remark~\ref*{#1}}}
\newcommand{\definitionref}[1]{\hyperref[#1]{Definition~\ref*{#1}}}
\newcommand{\propositionref}[1]{\hyperref[#1]{Proposition~\ref*{#1}}}
\newcommand{\conjectureref}[1]{\hyperref[#1]{Conjecture~\ref*{#1}}}
\newcommand{\corollaryref}[1]{\hyperref[#1]{Corollary~\ref*{#1}}}
\newcommand{\exampleref}[1]{\hyperref[#1]{Example~\ref*{#1}}}
\newcommand{\exerciseref}[1]{\hyperref[#1]{Exercise~\ref*{#1}}}
\let\old@caption\caption
\renewcommand*{\caption}[1]{%
	\setcounter{figure}{\value{equation}}%
	\stepcounter{equation}%
	\old@caption{#1}\relax%
}
\newcounter{intro}
\newtheorem{intro-conjecture}[intro]{Conjecture}
\newtheorem{intro-corollary}[intro]{Corollary}
\newtheorem{intro-theorem}[intro]{Theorem}
\newtheorem{intro-proposition}[intro]{Proposition}
\newcommand{\omY}{\omega_Y}
\newcommand{\OmY}{\Omega_Y}
\newcommand{\parref}[1]{\hyperref[#1]{\S\ref*{#1}}}
\newcommand{\chapref}[1]{\hyperref[#1]{Chapter~\ref*{#1}}}
\newcommand*\if@single[3]{%
  \setbox0\hbox{${\mathaccent"0362{#1}}^H$}%
  \setbox2\hbox{${\mathaccent"0362{\kern0pt#1}}^H$}%
  \ifdim\ht0=\ht2 #3\else #2\fi
  }
\newcommand*\rel@kern[1]{\kern#1\dimexpr\macc@kerna}
\newcommand*\widebar[1]{\@ifnextchar^{{\wide@bar{#1}{0}}}{\wide@bar{#1}{1}}}
\newcommand*\wide@bar[2]{\if@single{#1}{\wide@bar@{#1}{#2}{1}}{\wide@bar@{#1}{#2}{2}}}
\newcommand*\wide@bar@[3]{%
  \begingroup
  \def\mathaccent##1##2{%
    \if#32 \let\macc@nucleus\first@char \fi
    \setbox\z@\hbox{$\macc@style{\macc@nucleus}_{}$}%
    \setbox\tw@\hbox{$\macc@style{\macc@nucleus}{}_{}$}%
    \dimen@\wd\tw@
    \advance\dimen@-\wd\z@
    \divide\dimen@ 3
    \@tempdima\wd\tw@
    \advance\@tempdima-\scriptspace
    \divide\@tempdima 10
    \advance\dimen@-\@tempdima
    \ifdim\dimen@>\z@ \dimen@0pt\fi
    \rel@kern{0.6}\kern-\dimen@
    \if#31
      \overline{\rel@kern{-0.6}\kern\dimen@\macc@nucleus\rel@kern{0.4}\kern\dimen@}%
      \advance\dimen@0.4\dimexpr\macc@kerna
      \let\final@kern#2%
      \ifdim\dimen@<\z@ \let\final@kern1\fi
      \if\final@kern1 \kern-\dimen@\fi
    \else
      \overline{\rel@kern{-0.6}\kern\dimen@#1}%
    \fi
  }%
  \macc@depth\@ne
  \let\math@bgroup\@empty \let\math@egroup\macc@set@skewchar
  \mathsurround\z@ \frozen@everymath{\mathgroup\macc@group\relax}%
  \macc@set@skewchar\relax
  \let\mathaccentV\macc@nested@a
  \if#31
    \macc@nested@a\relax111{#1}%
  \else
    \def\gobble@till@marker##1\endmarker{}%
    \futurelet\first@char\gobble@till@marker#1\endmarker
    \ifcat\noexpand\first@char A\else
      \def\first@char{}%
    \fi
    \macc@nested@a\relax111{\first@char}%
  \fi
  \endgroup
}
   \def\MR#1{}
\renewcommand{\shG}{\mathscr{G}}
\renewcommand{\shF}{\mathscr{F}}
\newcommand{\shQ}{\mathscr{Q}}
\begin{document}

\title[On the behavior of Kodaira dimension]%
	{On the behavior of Kodaira dimension under smooth morphisms}
	
\author[M.~Popa]{Mihnea Popa}
\address{Department of Mathematics, Harvard University,
1 Oxford St., Cambridge, MA 02138} 
\email{mpopa@math.harvard.edu}

\author[Ch.~Schnell]{Christian Schnell}
\address{Department of Mathematics, Stony Brook University, Stony Brook, NY 11794-3651}
\email{christian.schnell@stonybrook.edu}

\subjclass[2020]{14D06, 14J10, 14D07}

\keywords{Kodaira dimension; varieties of general type; weak positivity}

\begin{abstract} 
We prove several results on the additivity of Kodaira dimension under smooth
morphisms of smooth projective varieties. 
\end{abstract}
\maketitle

\section{Introduction}

The purpose of this paper is to prove some of the conjectures in \cite{Popa} about the
additivity of the Kodaira dimension under smooth morphisms between smooth projective
varieties. Our results are unconditional when varieties of general type are involved
in some way, or when the base space is either of dimension $\leq 3$ or a good minimal
model of Kodaira dimension zero; for other varieties, we need a version of the
non-vanishing conjecture due to Campana and Peternell \cite{CP}.
We use the convention that an \emph{algebraic fiber space} is a surjective morphism
$f \colon X \to Y$ with connected fibers between two smooth projective varieties $X$
and $Y$ (over the field of complex numbers). We usually denote the general fiber by
the letter $F$. We say that an algebraic fiber space is \emph{smooth} if $f$ is a
smooth morphism.

A particular case of Conjectures 2.1 or 3.1 in \cite{Popa} predicts that if $f\colon
X \to Y$ is a smooth algebraic fiber space, and $X$ is of general type, then $Y$ is of
general type as well. Our first result is the following more general statement:

\begin{intro-theorem}\label{thm:GT}
	Let $f \colon X \to Y$ be a smooth algebraic fiber space whose general fiber $F$
	satisfies $\kappa(F) \geq 0$. Then $Y$ is of general type if and only if 
	$\kappa(X) = \kappa(F) + \dim Y$.
\end{intro-theorem}

By the Easy Addition formula \cite[Cor.~1.7]{Mori}, we always have the elementary
inequality $\kappa(X) \leq \kappa(F) + \dim Y$.  Also recall that Iitaka's
conjecture, to the effect that $\kappa(X) \geq \kappa(F) + \kappa(Y)$, is known when
$Y$ is of general type \cite[Cor.~IV]{Viehweg1}. One half of the theorem is therefore
already known. Our contribution is to prove the other half: the identity
$\kappa(X) = \kappa(F) + \dim Y$ implies that $Y$ is of general type.

\begin{note}
	Using the same techniques, one can show that if $f \colon X \to Y$ is an algebraic
	fiber space with $Y$ not uniruled, $\kappa(F) \geq 0$, and $\kappa(X) = \kappa(F)
	+ \dim Y$, and if $V \subseteq Y$ denotes the smooth locus of $f$, then $V$ is of
	log general type.  This answers positively  another question suggested by \cite{Popa}, when $Y$ is not
	uniruled, but is weaker than \cite[Conj.~3.1]{Popa}. See also \remarkref{rmk:open}.
\end{note}

Our next result deals with smooth algebraic fiber spaces with canonically polarized fibers.

\begin{intro-theorem}\label{thm:fibersGT}
	Let $f \colon X \to Y$ be a smooth algebraic fiber space such that each fiber $F$
	is canonically polarized. If $\kappa (Y) \ge 0$, or if $Y$ is uniruled, then we have
	$\kappa (X) = \kappa (F) + \kappa (Y)$.
\end{intro-theorem}

For technical reasons, the case where $\kappa(Y) = -\infty$ and $Y$ is not uniruled
is missing; note however that a well-known conjecture in birational geometry, called
the \emph{non-vanishing conjecture}, predicts that $\kappa (Y) = -\infty$ is in fact equivalent to $Y$ being uniruled.
Thus up to this prediction, our theorem implies the additivity of the Kodaira dimension for any smooth algebraic fiber space with fibers of this type. Unlike the other results in this paper, 
besides hyperbolicity-type techniques, \theoremref{thm:fibersGT} also relies on the existence and properties of moduli spaces of varieties of general type. We also observe that essentially the same proof works even when $X$ and $Y$ are only assumed to be quasi-projective, with the  base of log Kodaira dimension $\kappa (Y) \ge 0$; see Theorem \ref{thm:fibersGT-quasiproj}.\footnote{Note that since this paper was written, Campana \cite{Campana} has extended this result to morphisms whose fibers have semiample canonical bundle, with no conditions on the base.}

We also prove a more general result that depends on 
the following conjecture of Campana-Peternell \cite[Conj.~2.4]{CP}, which can be seen
as a generalization of the non-vanishing conjecture:

\begin{conjecture*}[Campana-Peternell] 
	Let $Y$ be a smooth projective variety, and assume that $K_Y \sim_{\QQ} A + B$,
	where $A$ and $B$ are $\QQ$-divisors such that $A$ is effective and $B$ is
	pseudo-effective. Then $\kappa (Y) \ge \kappa (A)$.  
\end{conjecture*}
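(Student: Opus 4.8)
Since the case $A = 0$ already asserts that a pseudo-effective canonical class has $\kappa(Y) \geq 0$, the statement contains the non-vanishing conjecture, and any proof must proceed through — or establish — the standard conjectures of the minimal model program. The plan is therefore to reduce it to the existence of good minimal models. As orientation, observe first that $\kappa(A) \geq 0$ always, and that if $A$ happens to be big then $K_Y \sim_{\QQ} A + B$ is big (a big plus a pseudo-effective class), so $\kappa(Y) = \dim Y \geq \kappa(A)$ trivially; the difficulty is entirely in the pseudo-effective summand $B$. Now $K_Y$, being the sum of an effective and a pseudo-effective divisor, is itself pseudo-effective, so $Y$ is not uniruled by \cite{BDPP}. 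Assuming the existence of a good minimal model, let $Y'$ be one; then $K_{Y'}$ is semiample and induces its Iitaka fibration $\phi \colon Y' \to W$, with $K_{Y'} \sim_{\QQ} \phi^{\ast} H$ for some ample class $H$ on $W$ and $\dim W = \kappa(Y') = \kappa(Y)$.

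The engine of the argument is a restriction to the general fibre. Pushing the relation forward along $Y \dashrightarrow Y'$ gives $K_{Y'} \sim_{\QQ} A' + B'$ with $A' = \phi_{\ast} A$ effective and $B'$ pseudo-effective. For a general fibre $G$ of $\phi$ we have $K_{Y'}|_G \sim_{\QQ} 0$, hence $A'|_G + B'|_G \sim_{\QQ} 0$; since $A'|_G$ is effective and $B'|_G$ is again pseudo-effective (restriction to a general fibre preserves pseudo-effectivity), intersecting with the top power of an ample class on $G$ yields $A'|_G \cdot H_G^{\dim G - 1} = 0$, and an effective divisor whose top intersection with an ample class vanishes is zero. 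Thus $A'|_G = 0$, i.e.\ $A'$ is $\phi$-vertical, and then every section of $\mathcal{O}_{Y'}(mA')$ is constant along the general fibre, so the rational map defined by $\lvert mA' \rvert$ factors through $\phi$; letting $m \to \infty$ gives $\kappa(A') \leq \dim W = \kappa(Y)$. Finally I would transfer this back to $A$: the map $Y \dashrightarrow Y'$ is a composition of flips, which are isomorphisms in codimension one and preserve Iitaka dimension, and divisorial contractions $g$, for which $\kappa(D) \leq \kappa(g_{\ast} D)$ for every effective $D$ — a section $s$ with $\Div(s) + mD \geq 0$ pushes forward to $\Div(s) + m g_{\ast} D \geq 0$, giving an injection on spaces of sections. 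Hence $\kappa(A) \leq \kappa(A') \leq \kappa(Y)$, as required.

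The single genuine obstacle is the hypothesis I have used: the existence of a good minimal model for $Y$, equivalently — since $K_Y$ is pseudo-effective — the abundance conjecture. This is what produces the semiample fibration $\phi$ and, above all, the \emph{numerical} triviality $K_{Y'}|_G \sim_{\QQ} 0$ on the general fibre; without it one knows only that $\kappa(K_Y|_G) = 0$, which is too weak to force $A'|_G$ to vanish, so the clean restriction step collapses. As abundance — and the non-vanishing conjecture nested inside it at $A = 0$ — is open in dimension at least four, the statement cannot currently be proved in general, which is precisely why it is taken as a hypothesis here and can be verified unconditionally only in low dimensions or when $A$ already supplies the needed positivity.
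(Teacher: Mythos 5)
The statement you were asked to prove is labeled as a \emph{Conjecture} in the paper: the authors do not prove it, and they note explicitly (citing \cite[p.~12--13]{CP}) that it is known only conditionally, as a consequence of the existence of good minimal models --- hence unconditionally only when $\dim Y \le 3$. Your proposal is precisely that conditional reduction, and as such it is essentially correct and matches the argument the paper attributes to Campana--Peternell: run the MMP to a good minimal model $Y'$, push the decomposition $K_Y \sim_{\QQ} A + B$ forward along the birational map $Y \dashrightarrow Y'$ (note the slip in your write-up: $A'$ is the pushforward under this birational map, not under $\phi$), use $K_{Y'} \sim_{\QQ} \phi^{\ast} H$ to get $A'|_G + B'|_G \sim_{\QQ} 0$ on a very general fibre $G$ (you need \emph{very} general to restrict the pseudo-effective class $B'$, but this is harmless), force $A'|_G = 0$ by intersecting with an ample class, and conclude $\kappa(A') \le \dim W = \kappa(Y)$ because sections of $mA'$ are constant on $G$; the transfer $\kappa(A) \le \kappa(A')$ through flips and divisorial contractions is also correct, since pushforward of rational functions gives injections on spaces of sections. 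You also correctly diagnose why nothing better is possible: the case $A = 0$ is the non-vanishing conjecture, so any unconditional proof is out of reach, which is exactly why the paper treats this statement as a hypothesis (verified for $\dim Y \le 3$ via the known MMP) rather than as a result.
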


The non-vanishing conjecture is the case $A = 0$. Concretely, it says
that if $K_Y$ is pseudo-effective -- which is equivalent to $Y$ not being uniruled
\cite{BDPP} -- then $\kappa (Y) \ge 0$. It can be easily checked \cite[p.~12--13]{CP}
that the Campana-Peternell conjecture is implied by the existence of good minimal models; 
it therefore holds if $\dim Y \le
3$, since this conjecture in the Minimal Model Program (MMP)
is known to hold in this range. On the other hand, the Campana-Peternell
conjecture is definitely weaker than the full abundance conjecture; in fact, we
believe that it is basically equivalent to the non-vanishing conjecture
\cite{Schnell2}.

\begin{intro-theorem}\label{thm:CP-consequence}
Let $f \colon X \to Y$ be a smooth algebraic fiber space, and assume that  the
Campana-Peternell conjecture holds.  Then $ \kappa (F) + \kappa (Y) \ge \kappa (X)$.
\end{intro-theorem}

\begin{note}
More precisely, we need the Campana-Peternell conjecture for the fibers of the Iitaka
fibration of $Y$ if $\kappa (Y) \ge 0$; or the non-vanishing conjecture for $Y$
itself if $\kappa (Y) = - \infty$.
\end{note}

Iitaka's conjecture predicts of course the opposite inequality $\kappa (X) \ge \kappa
(F) + \kappa (Y)$, for an arbitrary morphism; it is known to be implied by the
existence of a good minimal model for the geometric generic fiber of $f$ by a result
of Kawamata \cite{Kawamata}.  Therefore we obtain the following loosely formulated consequence.

\begin{intro-corollary}
The conjectures of the MMP imply \cite[Conj.~2.5]{Popa}, stating that if $f \colon X \to Y$ is a smooth algebraic fiber space, then $\kappa (X) = \kappa (F) + \kappa (Y)$.
\end{intro-corollary}

In any event, we obtain unconditional results -- and hence the solution to
\cite[Conj.~2.1]{Popa} -- for algebraic fiber spaces over surfaces and threefolds.

\begin{intro-corollary}\label{cor:surfaces}
If $f \colon X \to S$ is a smooth algebraic fiber space, with $S$ a smooth projective
surface, then $\kappa (X) = \kappa (F) + \kappa (S)$.
\end{intro-corollary}

\begin{intro-corollary}
If $f \colon X \to Y$ is a smooth algebraic fiber space, with $Y$ a smooth projective
threefold, then $\kappa (F) + \kappa (Y) \ge \kappa (X)$.
\end{intro-corollary}

In \corollaryref{cor:surfaces} we have equality due to the fact that the Iitaka conjecture is known over surfaces; see \cite{Cao}, and the references therein. This is not yet known when $Y$ is an arbitrary threefold.

\theoremref{thm:CP-consequence} can be easily reduced to the two cases $\kappa (Y) =
- \infty$ and $\kappa (Y) = 0$, and is therefore a consequence of the following two
results. 

\begin{intro-proposition}\label{kappa-negative}
Let $f\colon X \to Y$ be a smooth morphism between smooth projective varieties, with
$Y$ uniruled. Then $\kappa (X) = -\infty$.
\end{intro-proposition}

Thus the additivity of Kodaira dimension for smooth morphisms over varieties $Y$ with
$\kappa (Y) = -\infty $ holds if one assumes the non-vanishing conjecture.  The proof of
\propositionref{kappa-negative} is the first step in the proof of
\theoremref{thm:fibersGT}. It works by reduction to the case $Y = \PP^1$, where a stronger
version is a deep result of Viehweg-Zuo \cite{VZ1}. Therefore the main new input is
the following:

\begin{intro-theorem}\label{thm:kappa=0}
	Let $f\colon X \to Y$ be a smooth algebraic fiber space with $\kappa (Y) = 0$. 
\begin{enumerate}
\item If the Campana-Peternell conjecture holds on $Y$, so for instance if $Y$ has a good minimal model, then 
$\kappa (F) \ge \kappa (X)$.
\item If $Y$ is actually a good minimal model, in other words if $K_Y \sim_{\QQ} 0$, then $\kappa (X) = \kappa (F)$. 
Moreover, we have $P_m (F) \ge P_m (X)$ for all $m \ge 1$, with equality if $Y$ is simply connected (e.g.
Calabi-Yau).
\end{enumerate}
\end{intro-theorem}

Thus \cite[Conj.~2.5]{Popa} holds unconditionally when $Y$ is a good minimal model
with $\kappa (Y) = 0$. When $Y$ is an abelian variety, this is a special case of
\cite[Thm.~A(2)]{MP}. Note for completeness that the conjecture also holds when 
$X$ is a good minimal model with $\kappa (X) = 0$, since then so is $Y$ by \cite{TZ}.

The proofs of the results in this paper rely on a Hodge module construction from
\cite{PS1, PS2}, which was in turn heavily inspired by techniques of Viehweg-Zuo in their work on hyperbolicity. Because of the smoothness assumption, 
semistable reduction and the technical passage to logarithmic Higgs bundles are
however not needed. The proof of \theoremref{thm:kappa=0} is somewhat more delicate
(than for instance that of \theoremref{thm:GT}), needing both an additional technical
ingredient and important analytic results from \cite{PT,CaP} (see also \cite{HPS}).

\begin{remark*}
	The disadvantage of this method is the lack of symmetry between the
	assumptions and the conclusion. Say $f \colon X \to Y$ is a smooth algebraic fiber
	space, and $D$ a divisor on $Y$. Roughly speaking, we start from the assumption
	that $m K_X - \fu D$ is \emph{effective} for some $m \geq 1$; and the
	conclusion is that $m' K_Y - D$ is \emph{pseudo-effective} for $m' \gg m$. The
	Campana-Peternell conjecture removes this asymmetry, and allows us to put 
	``pseudo-effective'' or ``effective'' in both places; but it would of course be
	desirable to have unconditional results.
\end{remark*}

We conclude by observing that the Campana-Peternell conjecture, together with the techniques above, also leads to the following characterization of $\kappa(Y)$ that can be seen as a generalization of \theoremref{thm:GT}.

\begin{intro-corollary}\label{cor:CP-consequence}
	Let $f \colon X \to Y$ be a smooth algebraic fiber space, with general fiber $F$ satisfying $\kappa (F) \ge 0$.
	If the Campana-Peternell conjecture holds on $X$ and $Y$, then
	\[
		\kappa(Y) = \max \menge{\kappa(L)}{\text{$mK_X - \fu L$ is pseudo-effective for
		some $m \geq 1$}},
	\]
	where the maximum is taken over all line bundles on $Y$.
\end{intro-corollary}

\subsection*{Acknowledgements} 
We thank Dori Bejleri, Gavril Farkas, Fanjun Meng, Sung Gi Park, Valentino Tosatti and Ziquan Zhuang for their
comments and suggestions. In particular, Zhuang's help was crucial for the proof of
\theoremref{thm:fibersGT}. M.P.\ is partially supported by NSF grant DMS-2040378.
Ch.S.\ is partially supported by NSF grant DMS-1551677 and by a Simons Fellowship
from the Simons Foundation. He thanks the Max-Planck-Institute for providing him with
excellent working conditions during his stay in Bonn.

\section{Proofs}

\subsection{Proof of Theorem \ref*{thm:GT}}

We start by considering a general construction.
Let $A$ be an ample line bundle on $Y$. According to \cite[Prop.~1.14]{Mori}, the
condition that $\kappa(X) = \kappa(F) + \dim Y$ is equivalent to having
\[
	H^0 (X, \omega_X^{\otimes m } \otimes f^* A^{-1}) \neq 0
\]
for some $m \geq 1$. Using the projection formula, this is equivalent to
the existence of a nontrivial (hence injective) morphism
\begin{equation}\label{eqn:inclusion}
	L : = A \otimes (\omega_Y^{-1})^{\otimes m}  \into f_* \omega_{X/Y}^{\otimes m}.
\end{equation}
Note that by the invariance of plurigenera, the sheaf $f_* \omega_{X/Y}^{\otimes m}$
is locally free of rank $P_m(F)$ on all of $Y$.

We now use Viehweg's well-known fiber product trick, in the following form: let
$X^{(m)}$ denote the $m$-fold fiber
product $X \times_{Y}  \cdots \times_{Y} X$, with its induced morphism
$f^{(m)}: X^{(m)} \rightarrow Y$. Note that this is still a smooth algebraic fiber
space, with general fiber isomorphic to the $m$-fold product $F \times \cdots \times F$. Moreover, 
\begin{equation}\label{eqn:Vie}
f^{(m)}_* \omega_{X^{(m)}/Y}^{\otimes m} \simeq (f_* \omega_{X/Y}^{\otimes m})^{\otimes m}
\end{equation}
for example by \cite[Cor.~4.11]{Mori}. This gives us an inclusion
\[
	L^{\otimes m} \into  f^{(m)}_* \omega_{X^{(m)}/Y}^{\otimes m}.
\]
By replacing $X$ by $X^{(m)}$, and the fiber space $f$ by $f^{(m)}$, we are therefore
allowed to assume from the beginning that we are working with a fiber space $f$ such that
\begin{equation}\label{eqn:scn}
	H^0 (X, B^{\otimes m}) \neq 0  \quad \text{where \,\, $B = \omega_{X/Y} \otimes f^*
	L^{-1}$.}
\end{equation}
Under this assumption, one can bring into play the machinery used in \cite{PS1,
PS2}, strongly inspired in turn by the celebrated method developed by Viehweg-Zuo
\cite{VZ1,VZ2}, based on the existence of so-called Viehweg-Zuo sheaves. Note however that, unlike in the 
works above, the line bundle $L$ we consider here is usually not positive any more. 

First, using \cite[Thm.~2.2]{PS2} and denoting $\shA_Y = \Sym T_Y$, ($\ref{eqn:scn}$) implies that there exists a  graded $\shA_Y$-module $\shG_{\bullet}$ that is coherent over $\shA_Y$, and has the following among its special properties:
\begin{enumerate}
\item 
One has $\shG_0 \simeq L$.
\item \label{en:VZc}
Each $\shG_k$ is torsion-free.
\item
There exists a regular holonomic $\Dmod_Y$-module $\Mmod$ with good filtration
$F_{\bullet} \Mmod$, and an inclusion of graded $\shA_Y$-modules $\shG_{\bullet} \subseteq
\gr_{\bullet}^F \! \Mmod$.
\item 
The filtered $\Dmod$-module $(\Mmod, F_{\bullet} \Mmod)$ underlies a polarizable Hodge module $M$ on $Y$, with strict support $Y$, and $F_k \Mmod = 0$ for $k < 0$.
\end{enumerate}
We next follow closely \cite[\S3.2]{PS2}. The argument is based on considering the chain of 
$\shO_Y$-module homomorphisms
$$0 \longrightarrow \shG_0 \overset{\theta_0}{\longrightarrow}  \shG_{1}  \otimes  \OmY^1 
\overset{\theta_{1} \circ \id}{\longrightarrow}  \shG_{2}  \otimes  
(\OmY^1)^{\otimes 2} \longrightarrow \cdots$$
obtained from the graded $\shA_Y$-module structure on $\shG_\bullet$ and the generalized 
Kodaira-Spencer maps 
$$\theta_k:  \shG_k  \longrightarrow \shG_{k+1}  \otimes \OmY^1,$$
and originated in work of Kov\'acs and Viehweg-Zuo. Using the identification $\shG_0 \simeq L$ and 
the fact that ${\rm Ker}(\theta_k)^\vee$ is a weakly positive sheaf for each $k$,\footnote{This requires the
negativity of the kernels of generalized Kodaira-Spencer maps for pure Hodge modules
with strict support $Y$, proved in \cite{PW} using results by Zuo and Brunebarbe; see the references in
\emph{loc. cit.}}
the exact same argument as in the proof of
\cite[Thm.~3.5]{PS2} shows that at least one of the following two cases must
happen:

\begin{enumerate}
	\item The line bundle $L^{-1} = A^{-1} \tensor \omY^{\otimes m}$ is pseudo-effective.
	\item There exists a weakly positive sheaf $\mathcal{W}$ on $Y$ and an inclusion
$$\mathcal{W} \otimes L \into (\Omega_Y^1)^{\otimes N}$$
for some positive integer $N> 0$.
\end{enumerate}

In the first case, we immediately get that $\omY$ is big, and hence that $Y$ is of
general type. In the second case, we argue as follows. We rewrite the above inclusion
as
\begin{equation} \label{eq:inclusion}
	\mathcal{W} \otimes A \into (\Omega_Y^1)^{\otimes N} \otimes \omega_Y^{\otimes m},
\end{equation}
and observe that $\mathcal{W} \tensor A$ is big in the sense of Viehweg
\cite[Lem.~3.2]{PS2}. Now $\det \Omega_Y^1 = \omega_Y$, hence there exists also a (split) inclusion 
$$\omega_Y^{\otimes m}\hookrightarrow (\Omega_Y^1)^{\otimes m\cdot \dim Y}.$$
Putting everything together we deduce the existence of an inclusion
$$\shH \hookrightarrow  (\Omega_Y^1)^{\otimes M} $$ 
for some $M>0$, where $\shH$ is a big torsion-free sheaf. The result of Campana-P\u
aun \cite[Thm.~7.11]{CPa} implies then that $Y$ is of general type. (Campana and
P\u{a}un only state the result with $\shH$ being a line bundle, but their proof
still works when $\shH$ has higher rank; see also \cite{Schnell:CP} for a streamlined
proof of this important result.)

\begin{remark}[{\bf The non-smooth case}]\label{rmk:open}
As mentioned in the introduction, when $Y$ is not uniruled (which we recall implies that $\omega_Y$ is pseudo-effective by \cite{BDPP})  the proof can be extended without much effort to the general case, when $f$ is only smooth over an open set $V \subseteq Y$, to deduce that $V$ is of log general type. By blowing up it is immediate to reduce to the case 
where $Y \smallsetminus V = D$, a simple normal crossing divisor. The only difference in the argument is that 
($\ref{eqn:Vie}$) is now an isomorphism only away from $D$, which in practice means that in the proof above we have to
replace $L$ by $L (- rD)$ for some integer $r > 0$. The same steps now lead to the bigness of a line bundle of the form
$$\omega_Y^{\otimes a} \otimes \shO_Y(b D)$$ 
for some $a, b> 0$, and the pseudo-effectivity of $\omega_Y$ is then needed to deduce the bigness of $\omega_Y (D)$ 
by multiplying by a suitable multiple of either $\omega_Y$ or $\shO_Y (D)$. (Compare
also with the proof of \cite[Thm.~4.1]{PS2}.)

Note also that if $\kappa (Y) \ge 0$ and the complement of $V$ in $Y$ has codimension
at least $2$, it is not hard to check that $\kappa (V) = \kappa (Y)$; see e.g.
\cite[Lem.~2.6]{MP}. Thus under this assumption the conclusion is still that $Y$ 
is of general type.
\end{remark}

\subsection{Proof of Theorem \ref*{thm:fibersGT}}
We divide the proof into a few steps. 

\noindent
\emph{Step 1.}
In this step we deal with the case when $Y$ is uniruled,\footnote{As mentioned in the
introduction, conjecturally this is equivalent to $\kappa (Y) = - \infty$.} so in
particular $\kappa (Y) = -\infty$, by easy reduction to the case $Y = \PP^1$, in
which case a theorem of Viehweg-Zuo applies. We are in fact proving
\propositionref{kappa-negative}; no assumption on the fibers of  $f$ is necessary.
See also \remarkref{rmk:psef} below for an alternative proof based on the techniques
used in \theoremref{thm:GT}.

The hypothesis means that there exists a variety $Z$, which can be assumed to be smooth and projective, and a dominant rational map 
$$
Z \times \PP^1  \cdots \to Y.
$$
By resolving the indeterminacies of this map, we obtain a generically finite surjective morphism $\varphi \colon W \to Y$, such that $W$ admits a morphism $g \colon W \to Z$ with general fiber $\PP^1$. We consider the diagram
\[
\begin{tikzcd}
	\widetilde{X} \rar{\psi} \dar{\tilde{f}} \arrow[bend right=40,swap]{dd}{h} & X \dar{f} \\
W \rar{\varphi} \dar{g} & Y \\
Z &
\end{tikzcd}
\]
where $h = g \circ \tilde{f}$. Since $\psi$ is generically finite we have $\kappa (\widetilde{X}) \ge \kappa (X)$ (see 
\cite[Cor.~2.3(i)]{Mori}), and therefore it suffices to
show that $\kappa (\widetilde{X}) = - \infty$.
Note now that the general fiber $H$ of $h$ admits a smooth morphism (with fiber $F$) to the general fiber of $g$, i.e. 
to $\PP^1$. By \cite[Thm.~2]{VZ1} it follows that $\kappa (H) = - \infty$, and therefore by Easy Addition  (see 
\cite[Cor.~1.7]{Mori}) we also have $\kappa (\widetilde{X}) = - \infty$. As promised, this proves
\propositionref{kappa-negative}.

\noindent
\emph{Step 2.}
In this step we assume that $\kappa (Y) \ge 0$, and show that we can reduce to the case $\kappa (Y) = 0$. 
First, since the fibers of $f$ are of general type, the subadditivity 
$$\kappa (X) \ge \kappa (F) + \kappa (Y)$$
conjectured by Iitaka holds by \cite{Kollar}. To prove \theoremref{thm:fibersGT}, it therefore suffices to show 
\begin{equation}\label{eqn:super}
\kappa (F) + \kappa (Y) \ge \kappa (X).
\end{equation}
We show the reduction for this inequality.

To this end, we consider the Iitaka fibration $g \colon Y \to Z$, which after base change can be assumed to be a morphism, with $Z$ smooth and projective. We again denote $h = g \circ f$, with general fiber $H$, and we also denote the general fiber of $g$ by $G$. Thus we have a smooth morphism $H \to G$, with fiber $F$, and since $\kappa (G) = 0$, we may assume that $\kappa (F) \ge \kappa (H)$. On the other hand, by definition we have $\dim Z = \kappa (Y)$, and 
Easy Addition applied to $h$ gives 
$$\kappa (H) + \dim Z \ge \kappa (X).$$
Putting the two inequalities together, we obtain ($\ref{eqn:super}$).
 
\noindent
\emph{Step 3.}
Under the assumption that the fibers of $f$ are canonically polarized, by Taji \cite[Thm. 1.5]{Taji} we also know the validity of Campana's isotriviality conjecture, and hence the Kebekus-Kov\'acs conjecture, stating that if $\kappa (Y) \ge 0$, then $\kappa (Y) \ge \Var(f)$. Using this in combination with Step 2, we may therefore assume that $\Var(f) = 0$, i.e. that $f$ is birationally isotrivial.\footnote{It is worth noting that in this proof, the hyperbolicity-type techniques present in the other results in this paper are hidden in the results from \cite{VZ1} and \cite{Taji}.}

\noindent
\emph{Step 4.}
In this final step, we show the assertion in the theorem under the assumption that $\Var(f) = 0$ (with no assumptions on $Y$). This is a consequence of the existence and properties of the moduli space of canonically polarized varieties, as suggested to us by Ziquan Zhuang.

Concretely, the family $f$ induces a  morphism 
$$\varphi \colon Y \to {\bf M}$$
to the appropriate coarse moduli space of canonically polarized varieties  \cite{Viehweg2,Kollar2}. Since $f$ is birationally isotrivial, 
it follows that $\varphi$ is constant on an open set, and therefore constant everywhere; in other words the family $f$ is isotrivial. 
Since varieties of general type have finite automorphism group (${\bf M}$ is the coarse moduli space of a Deligne-Mumford stack), it is therefore well-known that there is a finite \'etale base change $\widetilde{Y} \to Y$ such that 
$$\widetilde{X} : = X \times_Y \widetilde{Y}  \simeq F \times \widetilde{Y},$$
where $F$ is any fiber of $f$. We thus have 
$$\kappa (X) = \kappa (\widetilde{X}) = \kappa (F) + \kappa (\widetilde{Y}) =\kappa (F) + \kappa (Y).$$
This concludes the proof.

\begin{remark}\label{rmk:psef}
\propositionref{kappa-negative} can be rephrased as saying that if $f\colon X \to Y$
is smooth and $\kappa (X) \ge 0$, then $K_Y$ is pseudo-effective. Here is an
alternative proof of this fact, resembling that of \theoremref{thm:GT}. We consider the construction in the proof of that theorem, only now we take $A = \shO_Y$, which is possible because $\kappa (X) \ge 0$. The exact same proof shows then that either we have directly 
that $K_Y$ is pseudo-effective (case (i)), or that there is an inclusion 
$$\mathcal{W} \into (\Omega_Y^1)^{\otimes N} \otimes \omega_Y^{\otimes m},$$
where $\mathcal{W}$ has pseudo-effective determinant (case (ii)).  But \cite[Thm.~7.6]{CPa} says precisely that in the latter case $K_Y$ is again pseudo-effective.
\end{remark}

\begin{remark}
	In fact, recent work in symplectic geometry can be used to prove the following
	strengthening of \propositionref{kappa-negative}: \emph{Let $f \colon X \to Y$ be a
	smooth algebraic fiber space. If $K_X$ is pseudo-effective, then $K_Y$ is also
	pseudo-effective.} Here is a sketch of the proof. If $K_Y$ is not pseudo-effective,
	then $Y$ is covered by rational curves. After restricting to a smooth rational
	curve whose normal bundle has nonnegative degree, we obtain a smooth algebraic
	fiber space $g \colon Z \to \PP^1$ such that $K_Z$ is pseudo-effective. But a
	very recent theorem by Pieloch \cite[Thm.~1.1]{Pieloch} -- answering a question by 
	Starr \cite{Jason} -- says that $Z$ is covered by
	sections of $g$, and therefore uniruled. This is a contradiction.
\end{remark}

\begin{remark}[{\bf The quasi-projective case}]
A very similar proof to that of Theorem \ref{thm:fibersGT} leads in fact to a statement for smooth morphisms between quasi-projective varieties, addressing an analogous case of the more general \cite[Conj. 3.1]{Popa}. We thank Sung Gi Park for pointing this out. 
\end{remark}

\begin{theorem}\label{thm:fibersGT-quasiproj}
	Let $f \colon U \to V$ be a smooth projective algebraic fiber space with canonically polarized fibers. If $\kappa (V) \ge 0$, then we have $\kappa (U) = \kappa (F) + \kappa (V)$.
\end{theorem}

\begin{proof}
This is the analogue of Steps 2.-4. in the proof of Theorem \ref{thm:fibersGT}.

Since the fibers are of general type, the analogue of Iitaka's conjecture, namely 
$$\kappa (U) \ge \kappa (F) + \kappa (V)$$
holds by \cite[Thm. 1.3]{KP}. Therefore we only need to focus on the opposite inequality.
We can then reduce to the case $\kappa (V) = 0$ by using the analogue of the Iitaka fibration and the Easy Addition theorem
in the case of open varieties; see \cite[Thm. 4 and 5]{Iitaka}.

Now again by \cite[Thm. 1.5]{Taji}, which is also valid in the quasi-projective case,
we may therefore assume that ${\rm Var}(f) = 0$. Then the exact same argument as in Step 4 of the proof of Theorem \ref{thm:fibersGT}, based on the properties of the moduli space of varieties of general type, shows that the relative canonical model $U'$ of $U$ over $V$ has an \'etale cover 
$\widetilde{U}$ which is birational to $F \times V$. The statement then follows, since on one hand $\kappa (U) = \kappa (U')$ as $U$ and $U'$ admit proper birational morphisms from a fixed smooth quasi-projective variety $W$ (see \cite[\S3]{Iitaka}), and on the other hand the log Kodaira dimension is preserved under \'etale covers (see \cite[Thm. 3]{Iitaka}).
\end{proof}

Note however that a more general statement, removing the condition on the base, and allowing for fibers with semiample canonical bundle, appeared very recently in \cite{Campana}.

\begin{remark}
If confirmed, the results of \cite{WW} would lead to validity of the statement of Theorem \ref{thm:fibersGT-quasiproj} in the case when the fibers of $f$ are only assumed to be of general type. One would only need to replace $f$ by its relative canonical model, and apply the same argument.
\end{remark}

\subsection{Proof of Theorem \ref*{thm:CP-consequence}}
If $\kappa (Y) = - \infty$ the result follows from the non-vanishing conjecture together with \propositionref{kappa-negative}, while  if $\kappa (Y) = 0$, it follows from \theoremref{thm:kappa=0}(i). If $\kappa (Y) > 0$, we can reduce to the case 
$\kappa = 0$ using the standard argument involving the Iitaka fibration: we may assume that this is a morphism
$g \colon Y \to Z$, with general fiber $G$ satisfying $\kappa (G) = 0$. Therefore the general fiber $H$ of $ h = g \circ f$ has 
a smooth morphism $H \to G$ with fiber $F$, and consequently $\kappa (F) \ge \kappa (H)$. On the other hand, Easy
Addition for $h$ gives 
$$\kappa (H)+ \kappa (Y) = \kappa (H) + \dim Z \ge \kappa (X).$$
Hence the remaining point is to prove \theoremref{thm:kappa=0}.

\subsection{Proof of Theorem \ref*{thm:kappa=0}}
If $\kappa (F) = - \infty$, then we also have $\kappa (X) = - \infty$ by Easy Addition. We may therefore assume throughout 
that $\kappa (F) \ge 0$. The key players are again the vector bundles 
$$\shF_m : = f_* \omega_{X/Y}^{\otimes m}, \,\,\,\,\,\,{\rm for} \,\,\,\,m \ge 1,$$
We also consider their twists
$$\shE_m : = \shF_m \otimes \omega_Y^{\otimes m} \simeq f_* \omega_X^{\otimes m} \,\,\,\,\,\,{\rm for} \,\,\,\,m \ge 1,$$
for which we have 
$$r_m := \rk(\shE_m)  =  \rk(\shF_m) = P_m (F) \,\,\,\,\,\,{\rm and} \,\,\,\,\,\, h^0 (Y, \shE_m) = P_m (X).$$
We have for each $m$ a (split) inclusion
$$\det \shF_m \hookrightarrow \shF_m^{\otimes r_m},$$
and therefore using the Viehweg fiber product trick just as in the proof of \theoremref{thm:GT}, we obtain an inclusion
$$(\det \shF_m)^{\otimes m} \hookrightarrow f^{(m r_m)}_* \omega_{X^{(mr_m)}/ Y}^{\otimes m}.$$
Proceeding precisely as in the proof of that theorem (where the role of $L$ there is now played by $\det \shF_m$), the arguments from \cite{PS2} lead then to one of the following two possibilities:

\smallskip

\noindent
(1) $(\det \shF_m)^{-1}$ is pseudo-effective.

\noindent
(2) There exists a weakly positive sheaf $\mathcal{W}$ on $Y$,   and an inclusion
$$\mathcal{W} \otimes \det \shF_m \hookrightarrow (\Omega_Y^1)^{\otimes N}$$
for some positive integer $N> 0$.

Recall on the other hand that $\shF_m$ is a weakly positive sheaf by \cite[Thm.~III]{Viehweg1}, 
and therefore $\det \shF_m$ is a pseudo-effective line bundle, e.g. by \cite[Lem.~1.6 (6)]{Viehweg1}. 
If (1) holds, then both $\det \shF_m$ and $(\det \shF_m)^{-1}$ are
pseudo-effective, and so $\det \shF_m \equiv 0$ (i.e. it is numerically trivial).

On the other hand, if (2) holds, by taking the saturation of the left hand side inside $(\Omega_Y^1)^{\otimes N}$, we obtain an  exact sequence 
$$0\longrightarrow \mathcal{W}' \otimes \det \shF_m \longrightarrow (\Omega_Y^1)^{\otimes N} \longrightarrow \shQ \longrightarrow 0$$
where $\mathcal{W}'$ is a weakly positive sheaf, and $\shQ$ is  torsion-free. We also know that 
$\det \shQ$ is pseudo-effective; this follows from
\cite[Thm.~1.3]{CPa}, which says that if $K_Y$ is pseudo-effective, then any quotient
of any tensor power of $\Omega_Y^1$ has pseudo-effective determinant.
Passing to determinants, we then obtain an isomorphism of the form 
\begin{equation}\label{decomposition}
\omega_Y^{\otimes a} \simeq (\det \shF_m)^{\otimes b} \otimes P,
\end{equation}
where $a, b>0$ and $P$ is a pseudo-effective line bundle.

\begin{remark}
Up to here we only used that $\kappa (F) \ge 0$ and $K_Y$ is pseudo-effective. 
\end{remark}

If we assume that $K_Y \sim_{\QQ} 0$, from ($\ref{decomposition}$) we again deduce that
$(\det \shF_m)^{-1}$ is pseudo-effective, and therefore as above $\det \shF_m \equiv 0$.
It follows from \cite[Thm.~5.2]{CaP} (see also \cite[Cor.~27.2]{HPS}) that $\shF_m$ is a hermitian flat vector bundle,
and therefore comes from a unitary representation of the fundamental group. If we
assume in addition that $Y$ is simply connected, we then obtain 
$\shF_m \simeq \shO_Y^{\oplus r_m}$, or equivalently
$$\shE_m \simeq  {(\omega_Y^{\otimes m})}^{\oplus r_m}.$$
Note that by hypothesis $\omega_Y^{\otimes m} \simeq \shO_X$ when $m$ is sufficiently divisible, and otherwise vanishes. 
Consequently $P_m (X) = P_m (F)$ when $P_m (Y) = 1$, and $P_m (X) = 0$ otherwise. 
In general we only have $r_m \ge h^0 (Y, \shE_m)$ (because unitary representations are
semisimple), and therefore by the same argument we obtain 
\[
	P_m (F) \ge P_m (X) \quad \text{for all $m \ge 1$},
\]
hence also $\kappa (F) \ge \kappa (X)$. On the other hand, a result of Cao-P\u aun
\cite[Thm.~5.6]{CaP} states, for any algebraic fiber space $f$, that if $c_1 (\det
\shF_m) = 0 \in H^2 (X, \RR)$ for some $m \ge 2$, then Iitaka's conjecture holds for
$f$. Cao and P\u{a}un prove this result under the assumption that $c_1 (\det \shF_m) = 0
\in H^2 (X, \ZZ)$, but because of \cite[Thm.~27.2]{HPS}, the weaker assumption is
sufficient to conclude that $\shF_m$ is a hermitian flat bundle; the rest of the
argument then proceeds as in \cite{CaP}. In our case, this gives the opposite inequality
$\kappa (X) \ge \kappa (F)$, and concludes the proof of (ii).

We are thus left with proving (i).  We need the following lemma.

\begin{lemma}
	Let $E$ be a nonzero vector bundle on a projective variety $X$. If $h^0(X, E)
	\geq \rk E + 1$, then there is a line bundle $L$ with $h^0 (X, L) \ge 2$ and an
	inclusion $L \into E^{\tensor r}$ for some $1 \leq r \leq \rk E$.
\end{lemma}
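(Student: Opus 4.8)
The plan is to extract the required line bundle from an exterior power of the span of the global sections, using the numerical hypothesis $h^0(X,E) \ge \rk E + 1$ to force a generic linear relation among the sections in which at least one coefficient is a \emph{non-constant} rational function. First I would fix a subspace $V \subseteq H^0(X,E)$ with $\dim_{\CC} V = \rk E + 1$, and let $k$ be the rank at the generic point $\eta \in X$ of the evaluation map $V \tensor_{\CC} \OX \to E$, so that $1 \le k \le \rk E$. Since $\dim_{\CC} V = \rk E + 1 > \rk E \ge k$, the induced $K(X)$-linear map $V \tensor_{\CC} K(X) \to E_{\eta}$ has nontrivial kernel. Concretely, I would choose generically linearly independent sections $t_1, \dots, t_k \in V$ spanning the image at $\eta$, together with one further section $t_0 \in V$ such that $t_0, t_1, \dots, t_k$ are $\CC$-linearly independent (possible as $\dim_{\CC} V \ge k+1$), and then write $t_0 = \sum_{i=1}^{k} c_i t_i$ in $E_{\eta}$ with $c_i \in K(X)$.

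The crucial observation is that not all of the $c_i$ can be constant: if they were, then $t_0 - \sum_i c_i t_i$ would be a global section of $E$ vanishing at $\eta$, hence zero since $E$ is torsion-free and $X$ is integral, contradicting the $\CC$-linear independence of $t_0, \dots, t_k$. So I may fix an index $i_0$ with $c_{i_0} \notin \CC$. I would then pass to $\Lambda^k E$ and consider the two global sections $\omega = t_1 \wedge \cdots \wedge t_k$ (nonzero, by generic independence) and $\omega'$, obtained from $\omega$ by replacing the factor $t_{i_0}$ with $t_0$. Substituting $t_0 = \sum_i c_i t_i$ into the wedge, every term except the $i_0$-th drops out, so $\omega' = c_{i_0}\,\omega$ at $\eta$; as $c_{i_0}$ is non-constant, $\omega$ and $\omega'$ are two $\CC$-linearly independent global sections of $\Lambda^k E$ that are proportional over $K(X)$.

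Finally I would let $L \subseteq \Lambda^k E$ be the saturation of the rank-one subsheaf generated by $\omega$, i.e.\ the kernel of $\Lambda^k E \to (\Lambda^k E / \langle \omega \rangle)/\mathrm{tors}$. Being the kernel of a map from a locally free sheaf to a torsion-free sheaf, $L$ is reflexive of rank one, hence a line bundle since $X$ is smooth. Both $\omega$ and $\omega'$ lie in $H^0(X,L)$: for $\omega'$ this is because its image in the torsion-free quotient $\Lambda^k E / L$ vanishes at $\eta$, hence identically. Thus $h^0(X,L) \ge 2$. Composing $L \into \Lambda^k E$ with the split antisymmetrization inclusion $\Lambda^k E \into E^{\tensor k}$ (split in characteristic zero) yields $L \into E^{\tensor r}$ with $r = k$ and $1 \le r \le \rk E$, as desired.

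The conceptual engine of the argument is the second paragraph: turning the inequality $h^0 > \rk E$ into a generic dependence whose ratio $c_{i_0}$ is non-constant, which is exactly what upgrades a pair of generically proportional sections into \emph{two} honestly independent ones. I expect the only genuinely delicate point to be the last step, namely producing an \emph{honest line bundle} rather than merely a torsion-free rank-one sheaf with two sections; this is where integrality and smoothness of $X$ are used (via reflexivity of saturated subsheaves of a vector bundle). For the application of the lemma, where the base is a smooth projective variety, this causes no difficulty.
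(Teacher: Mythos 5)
Your proof is correct, and it shares the paper's overall skeleton---produce two $\CC$-linearly independent global sections of some exterior power $\bigwedge^k E$ that are proportional at the generic point, saturate the rank-one subsheaf they generate to get a line bundle $L$ with $h^0(X,L) \geq 2$, and finish with the split (characteristic-zero) inclusion $\bigwedge^k E \into E^{\tensor k}$---but the engine producing the two sections is genuinely different. The paper takes $\shF \subseteq E$ to be the subsheaf generated by all of $H^0(X,E)$, of generic rank $r$, chooses $s_1,\dotsc,s_r$ generating it at a general point, and uses the fact that $\OX^{\oplus r} \to \shF$ cannot be surjective (else $h^0(X,E) = h^0(X,\shF) = r \leq \rk E$) to locate a special point $x$ where generation fails; a two-case analysis at $x$ (the $s_i$ remain linearly independent there, or they do not) then produces a second section of $\bigwedge^r E$ independent of $s_1 \wedge \dotsb \wedge s_r$. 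You instead argue entirely at the generic point: the inequality $h^0(X,E) \geq \rk E + 1$ forces a relation $t_0 = \sum_i c_i t_i$ over the function field $K(X)$ in which some coefficient $c_{i_0}$ is non-constant, and wedging converts the pair $1, c_{i_0}$ into the independent sections $\omega$ and $\omega' = c_{i_0}\,\omega$. What your route buys is uniformity: no case analysis, and no need to inspect special points where global generation fails. What the paper's route buys is a more geometric picture of where the second section comes from (an actual point at which the evaluation map degenerates). One further remark: your closing worry that obtaining an honest line bundle uses smoothness (a saturated rank-one subsheaf is reflexive, and rank-one reflexive sheaves are invertible only under local factoriality) applies equally to the paper's own proof, which asserts that the reflexive hull of $\bigwedge^r \shF$ is a line bundle; since the lemma is applied with a smooth base, this is harmless in both arguments.
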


\begin{proof}
	Let $\shF \subseteq E $ be the torsion-free subsheaf generated by the global sections of
	$E$, and let $1 \leq r \leq \rk E$ be the generic rank of $\shF$. Choose $r$
	global sections $s_1, \dotsc, s_r \in H^0(X, E)$ that generate the stalk of $\shF$ at a
	general point of $X$. The resulting morphism $\OX^{\oplus r} \to
	\shF$ is injective; it cannot be surjective, for otherwise it would be an
	isomorphism, and then $h^0(X, E) = h^0(X, \shF) = r \leq \rk E$, contradicting
	the assumption that $h^0(X, E) \geq \rk E + 1$. Let $x \in X$ be one of the points
	where the morphism fails to be surjective on stalks. Since $\shF$ is globally
	generated, depending on the situation we can do one of the following two things:
	
	$\bullet$~If $s_1, \ldots, s_r$ remain linearly independent at $x$, 
	we can find an additional global section $s \in H^0(X, E)$ such that
	such that $s(x)$ does not lie in the linear span of $s_1(x), \dotsc, s_r(x)$
	inside the stalk $E_x$. It follows that, among the $r+1$ sections 
	\[
		s_1 \wedge \dotsb \wedge s_r, \quad 
		s_1 \wedge \dotsb \wedge s_{i-1} \wedge s \wedge s_{i+1} \wedge \dotsb \wedge
		s_r \quad \text{for $i=1, \dotsc, r$}
	\]
	of the bundle $\bigwedge^r E$, at least $2$ are linearly indepedent. 
		
	$\bullet$~If $s_1, \ldots, s_r$ do not remain linearly independent at $x$, we can in any case find other global 
	sections $t_1, \ldots, t_r \in H^0 (X, E)$ such that $t_1(x), \ldots, t_r (x) \in E_x$ form part of a minimal system 
	of generators of $\shF_x$. Then 
	\[   
	       s_1 \wedge \dotsb \wedge s_r \quad {\rm and} \quad t_1 \wedge \dotsb \wedge t_r
	\]
	are linearly independent global sections of the bundle $\bigwedge^r E$.

	Either way, by construction all these sections come from global sections of the sheaf $\bigwedge^r \shF$. Let
	$L$ be the reflexive hull of $\bigwedge^r \shF$; then $L$ is a line bundle with
	$h^0(X, L) \geq 2$. We also have inclusions
	\[
		L \into \bigwedge^r E \into E^{\tensor r}
	\]
	because $E$ is locally free.
\end{proof}

Let us now suppose for the sake of contradiction that $P_m(X) \geq P_m(F) + 1$. The
lemma, applied to the vector bundle $\shE_m$, then gives us an inclusion 
$$L \into \shE_m^{\tensor r} \simeq \shF_m^{\otimes r} \otimes \omega_Y^{\otimes mr}$$ 
for some integer $1 \leq r \leq r_m$, where $L$ is a line bundle
with $h^0(X, L) \geq 2$, and so $\kappa(L) \geq 1$. Arguing as above, using the fiber product trick, we obtain one of the 
following two possibilities: 

\smallskip

\noindent
(1) $L^{-1} \otimes \omega_Y^{\otimes mr}$ is pseudo-effective.

\noindent
(2) There exists a weakly positive sheaf $\mathcal{W}$ on $Y$,   and a short exact sequence 
$$0 \to \mathcal{W} \otimes L   \to (\Omega_Y^1)^{\otimes N} \otimes \omega_Y^{\otimes mr} \to \shQ \to 0$$
for some positive integer $N> 0$.

In case (1), since $\kappa(L) \geq 1$, the Campana-Peternell conjecture implies that $\kappa(Y) \geq 1$, contradicting our assumption that $\kappa(Y) = 0$. 

In case (2), as above $\det \shQ$ must be a pseudo-effective line bundle by \cite[Thm.~1.3]{CPa}. 
Passing to
determinants in the short exact sequence in (2),  we now obtain an isomorphism
\[
	\omega_Y^{\tensor a} \simeq L^{\tensor b} \tensor P
\]
where $a,b > 0$ and $P$ is a pseudo-effective line bundle. We then conclude just as
in case (1).

\subsection{Proof of Corollary \ref*{cor:CP-consequence}}
Let $f \colon X \to Y$ be a smooth algebraic fiber space whose general fiber $F$
satisfies $\kappa(F) \geq 0$. We use the notation 
\[
	\kappa_f(Y) := \max \menge{\kappa(Y, L)}%
	{\text{$mK_X - \fu L$ is pseudo-effective for some $m \geq 1$}},
\] 
and show that $\kappa(Y) = \kappa_f(Y)$ provided that $\kappa (F) \ge 0$. One
inequality is easy. Our assumption that $\kappa(F) \geq 0$ implies that the relative
canonical divisor $K_{X/Y}$ is pseudo-effective; the reason is that
$\omega_{X/Y}$ has a canonical singular hermitian metric with semi-positive curvature
\cite[Thm.~4.2.2]{PT} (see also \cite[Thm.~27.1]{HPS}). Since $K_X - \fu K_Y \equiv
K_{X/Y}$, we get $\kappa_f(Y) \geq \kappa(Y)$. 

To prove the other inequality, let $L$ be a line bundle on $Y$ such that $mK_X
- \fu L$ is pseudo-effective, and such that $\kappa_f(Y) = \kappa(Y, L)$. 
We may assume that $\kappa(Y, L)\ge 0$, and consider an integer $n \ge 1$ such that the linear system $\abs{nL}$ gives the Iitaka 
fibration of $L$.  After
replacing $Y$ by a log resolution of the base locus of $\abs{nL}$, and $f \colon X
\to Y$ by the fiber product, we can assume that $nL \sim B + E$ with $B$ semi-ample
and $E$ effective; consequently, $nmK_X - f^* B \sim n(mK_X - f^* L) + E$ is still
pseudo-effective. Moreover, because of the assumption on $n$ we have 
that $\kappa (Y, B) = \kappa (Y, L)$, hence after replacing $L$ by $B$ we may assume from the
beginning that $L$ is semi-ample. The Campana-Peternell conjecture (on $X$) implies
that $mK_X - \fu L$ is effective
for $m \gg 0$; this is proved in \cite{Schnell2}. We can now apply exactly
the same argument as in the proof of \theoremref{thm:GT}, up to the line containing
\eqref{eq:inclusion}, and obtain a short exact sequence
\[
	0 \to \mathcal{W} \tensor L \to (\Omega_Y^1)^{\otimes N} \otimes \omega_Y^{\otimes m}
	\to \mathcal{Q} \to 0
\]
for some $N \geq 1$, with $\mathcal{W}$ weakly positive. (Concretely, $\mathcal{W}$
is a torsion-free sheaf with a semi-positively curved singular hermitian metric.)
Since $m \geq 1$, both $K_Y$ and $\det \mathcal{Q}$ must therefore be pseudo-effective:
this follows by combining
\cite[Thm.~7.6]{CPa} (to conclude that $K_Y$ is pseudo-effective) and
\cite[Thm.~1.3]{CPa} (to conclude that any quotient of any tensor power of
$\Omega_Y^1$ has pseudo-effective determinant). Passing to determinants in the short
exact sequence above, we see that there are two integers $a,b \geq 1$ such that
\[
	a K_Y \sim bL + \det \mathcal{Q} + \det \mathcal{W}.
\]
As both $\det \mathcal{Q}$ and $\det
\mathcal{W}$ are pseudo-effective, the Campana-Peternell conjecture (on $Y$) now
gives us the desired inequality $\kappa(Y) \geq \kappa(Y, L) = \kappa_f(Y)$.

\bibliographystyle{amsalpha}
\bibliography{bibliography}

\end{document}